    \newcommand{\R}[0]{{\mathbb{R}}}
    \newcommand{\C}[0]{{\mathbb{C}}}
    \newcommand{\IR}[0]{{\mathbb{IR}}}
    \newcommand{\IC}[0]{{\mathbb{IC}}}
    \def\tluste#1{\protect{\textrm{\boldmath $#1$}}}
    \newcommand{\omace}[1]{\mbox{$\overline{{#1}}$}}	
    \newcommand{\umace}[1]{\mbox{$\underline{{#1}}$}}  
    \newcommand{\imace}[1]{\tluste{#1}} 		
\newcommand{\smace}[1]{\tluste{#1}{}^S} 
    \newcommand{\ivr}[1]{\tluste{#1}} 		
    \newcommand{\inum}[1]{\tluste{#1}} 		
    \newcommand{\mmid}[0]{;\ }		
    \newcommand{\ihull}[0]{{\square}}	
    \newcommand{\hulli}[1]{\big[#1\big]}	
    \def\Mid#1{#1^c}		
    \def\Rad#1{#1^\Delta}		
    \newcommand{\mna}[1]{{\mathcal{#1}}}
\renewcommand\Re{\operatorname{\textsf{Re}}}
\renewcommand\Im{\operatorname{\textsf{Im}}}
\newcommand{\seznam}[1]{{\{1, \ldots, {#1}\}}}
\DeclareMathOperator{\Mag}{mag}	
\DeclareMathOperator{\diag}{diag}
\def\clqq{``}
\def\crqq{''}
\def\quo#1{\clqq{}#1\crqq{}}  
    \newtheorem{theorem}{Theorem}
    \newtheorem{proposition}[theorem]{Proposition}
    \newtheorem{observation}[theorem]{Observation}
    \newtheorem{example}[theorem]{Example}
\def\ps@pprintTitle{%
 \let\@oddhead\@empty
 \let\@evenhead\@empty
 \def\@oddfoot{}%
 \let\@evenfoot\@oddfoot}
\begin{document}

    \begin{frontmatter}

    \title{Computing the spectral decomposition of interval matrices and a study on interval matrix powers}

    \author[kam,cas]{David Hartman\corref{mycorrespondingauthor}}
    \cortext[mycorrespondingauthor]{Corresponding author}
    \ead{hartman@kam.mff.cuni.cz}

    \author[kam]{Milan Hlad\'{i}k}
    \ead{hladik@kam.mff.cuni.cz}

    \author[kam]{David \v{R}\'{i}ha}
    
    \journal{}

\address[kam]{Charles University, Faculty  of  Mathematics  and  Physics, Department of Applied Mathematics,\\ Malostransk\'e n\'am.~25, 11800, Prague, Czech Republic.
}
\address[cas]{The Institute of Computer Science of the Czech Academy of Sciences, Prague.}

\begin{abstract}
We present an algorithm for computing a spectral decomposition of an interval matrix as an enclosure of spectral decompositions of particular realizations of interval matrices. The algorithm relies on tight outer estimations of eigenvalues and eigenvectors of corresponding interval matrices. We present a method for general interval matrices as well as its modification for symmetric interval matrices. As an illustration, we apply the spectral decomposition to computing powers of interval matrices. Numerical results suggest that a simple binary exponentiation is more efficient for smaller exponents, but our approach becomes better when computing higher powers or powers of a special type of matrices. In particular, we consider symmetric interval and circulant interval matrices. In both cases we utilize some properties of the corresponding classes of matrices to make the power computation more efficient.
\end{abstract}

    \begin{keyword}
    interval matrix \sep spectral decomposition \sep matrix power \sep eigenvalues \sep eigenvectors
    \MSC[2010] 65G40 \sep 15A18
    \end{keyword}

    \end{frontmatter}



\section{Introduction}

Eigenvalue decomposition of a real or complex matrix is, no doubts, a very important tool in numerical analysis and algorithm design. In practice, however, the data are usually estimated or measured with some errors. Uncertainty in data can be modelled by intervals representing the range of possible values, giving rise to the concept of interval matrices~\cite{MooKea2009, Neu1990}. This motivated us to extend spectral decomposition to interval matrices.

In the literature, we can found many works on enclosing or approximating eigenvalues of interval matrices and on the related topics dealing with stability of interval matrices. On the other hand, much less results exist on bounding eigenvectors or considering the whole eigenvalue decomposition. In this paper, we investigate eigenvalue decomposition for both general interval matrices and symmetric interval matrices. Before we formulate the problem, we need to introduce some notation first.

\subsection{Interval notation}

An interval matrix is the set of matrices
$$
\imace{A}=\{A\in\R^{m\times n}\mmid \umace{A}\leq A \leq \omace{A}\},
$$
where $\umace{A},\omace{A}\in\R^{m\times n}$ are given lower and upper bound matrices, respectively, and inequalities are understood entrywise. Interval vectors are regarded as one column interval matrices and similarly for interval scalars. The set of $m$-by-$n$ interval matrices is denoted by $\IR^{m\times n}$.
The midpoint and radius matrices associated to $\imace{A}$ are defined respectively as
$$
\Mid{A}=\frac{1}{2}(\umace{A}+\omace{A}),\quad
\Rad{A}=\frac{1}{2}(\omace{A}-\umace{A}).
$$
An (interval) enclosure of a bounded and nonempty set $\mna{S}\subset\R^n$ is any interval vector containing~$\mna{S}$, that is, $\ivr{v}\in\IR^n: \mna{S}\subseteq\ivr{v}$. The smallest interval enclosure is called an interval hull and denoted
$$
\ihull\mna{S}=\bigcap_{\ivr{v}\in\IR^n:\mna{S}\subseteq \ivr{v}}\ivr{v}.
$$
The magnitude of an interval $\inum{a}\in\IR$ is the largest absolute value of the values from the interval, i.e., 
$\Mag(\inum{a})=\max_{a\in\inum{a}}{|a|}=|\Mid{a}|+\Rad{a}$. 
For interval arithmetic, see, e.g., the textbooks \cite{MooKea2009,Neu1990}. 

Let $\imace{A}\in\IR^{m\times n}$ and $\ivr{b}\in\IR^m$. The solution set of the system of interval equations $\imace{A}x=\ivr{b}$ is defined as the union of all solutions of all instances, that is,
$$
\{x\in\R^n\mmid \exists A\in\imace{A},\,\exists b\in\ivr{b}: Ax=b\}.
$$
There are many methods known for computing an enclosure of the solution set; see \cite{MooKea2009,Neu1990,Hla2014b,Rum2010,Ska2018} among others.

Let $\imace{A}\in\IR^{n\times n}$ such that both $\Mid{A}$ and $\Rad{A}$ are symmetric. Then the symmetric interval matrix is defined as the subset of $\imace{A}$ formed by symmetric matrices, that is,
$$
\smace{A}=\{A\in\imace{A}\mmid A=A^T\}.
$$

Complex interval numbers can be represented in several forms \cite{AleHer1983,Nic1980}, among which the most frequently used are rectangular and circular forms. Given $\inum{a},\inum{b}\in\IR$, the rectangular form reads
$$
\inum{a}+i\inum{b}=\{a+ib\mmid a\in\inum{a},\,b\in\inum{b}\}.
$$
Given $c\in\C$ and $r\geq0$, the circular form \cite{Hen1971} reads
$$
\langle c,r\rangle=\{z\in\C\mmid |z-c|\leq r\}.
$$
While the rectangular form gives precise results for addition and subtraction, circular form is more convenient for multiplication and division. We will get use of circular complex numbers, which are also implemented in \textsf{Intlab}~\cite{Rum1999}, and we denote the set of such complex intervals by~$\IC$.

\section{Spectral decomposition of an interval matrix}

If a matrix $A\in\R^{n\times n}$ is diagonalizable, we can write it as $A=V\Lambda V^{-1}$, where $\Lambda\in\C^{n\times n}$ is a diagonal matrix with eigenvalues of $A$ on the diagonal, and $V\in\C^{n\times n}$ is a matrix having the corresponding eigenvectors as its columns.
If $A$ is symmetric, then eigenvalues and eigenvectors are real and $V$ can be constructed to be orthogonal.
Since spectral decomposition is not unique, we consider just one particular instance.

Given $\imace{A}\in\IR^{n\times n}$, our problem (called a spectral decomposition of an interval matrix) states
\begin{quotation}\sl\noindent
Find a diagonal matrix $\imace{\Lambda}\in\IC^{n\times n}$ and a matrix $\imace{V}\in\IC^{n\times n}$ such that for each $A\in\imace{A}$ there are $\Lambda\in\imace{\Lambda}$ and $V\in\imace{V}$ such that $A=V\Lambda V^{-1}$.
\end{quotation}
Once we have an enclosure $\imace{V}$, we can compute an enclosure of the set of inverses $\{V^{-1}\mmid V\in\imace{V}\}$ by methods from interval computation; see e.g.\ \cite{RohFar2011,Roh2011a}. We will use $\imace{V}^{-1}$ to denote an interval enclosure of the set of inverses computed by a certain method. In our implementation, we employ the standard function from Intlab~\cite{Rum1999}.

For a given symmetric interval matrix $\smace{A}\in\IR^{n\times n}$, the problem statement is analogous:
\begin{quotation}\sl\noindent
Find a diagonal matrix $\imace{\Lambda}\in\IR^{n\times n}$ and a matrix $\imace{Q}\in\IR^{n\times n}$ such that for each $A\in\smace{A}$ there are $\Lambda\in\imace{\Lambda}$ and orthogonal $Q\in\imace{Q}$ such that $A=Q\Lambda Q^T$.
\end{quotation}
To find a spectral decomposition of an interval matrix, we need to compute enclosures of eigenvalues and eigenvectors first. These issues are addressed in the next sections.

\subsection{Enclosing the eigenvalues}

For enclosing eigenvalues of interval matrices, we utilize the Bauer--Fike theorem \cite{GolLoa1996, HorJoh1985}. Therein, $\kappa_p({A})=\|{A}\|_p\|{A}^{-1}\|_p$ is the condition number and $\|\cdot\|_p$ is the induced matrix $p$-norm.

\begin{theorem}[Bauer--Fike, 1960]
Let $A,B\in\R^{n \times n}$ and suppose that $A$ is diagonalizable, that is, $V^{-1}AV=\diag(\lambda_1,\dots,\lambda_n)$ for some $V\in\C^{n \times n}$ and $\lambda_1,\dots,\lambda_n\in\C$. Then for every (complex) eigenvalue $\lambda(A+B)$ of $A+B$ there is some $i\in\seznam{n}$ such that
\begin{align*}
|\lambda(A+B)-\lambda_i|\leq 
  \kappa_p(V)\cdot\|B\|_p.
\end{align*}
\end{theorem}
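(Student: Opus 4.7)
The plan is to consider an arbitrary eigenvalue $\mu := \lambda(A+B)$ of $A+B$ and to exhibit an index $i\in\seznam{n}$ with $|\mu - \lambda_i|\leq \kappa_p(V)\,\|B\|_p$. If $\mu$ coincides with some $\lambda_i$, the inequality holds trivially with left-hand side zero, so I may assume $\mu\neq\lambda_i$ for all $i$; this makes the diagonal matrix $\Lambda - \mu I$ invertible, and I will exploit the singularity of $(A+B) - \mu I$.

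First I would use the decomposition $A = V\Lambda V^{-1}$ to rewrite
\begin{equation*}
(A+B) - \mu I \;=\; V(\Lambda - \mu I)V^{-1} + B \;=\; V(\Lambda - \mu I)V^{-1}\bigl(I + V(\Lambda - \mu I)^{-1}V^{-1}B\bigr).
\end{equation*}
Since the left-hand side is singular while the outer factor $V(\Lambda-\mu I)V^{-1}$ is invertible, the bracketed matrix $I+M$, with $M := V(\Lambda-\mu I)^{-1}V^{-1}B$, must itself be singular.

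Next I would invoke the Neumann-series criterion: if $\|M\|_p < 1$, then $I+M$ is invertible with inverse $\sum_{k\geq 0}(-M)^k$. Contrapositively, singularity of $I+M$ forces $\|M\|_p \geq 1$. Combining this with submultiplicativity of the induced $p$-norm gives
\begin{equation*}
1 \;\leq\; \|M\|_p \;\leq\; \|V\|_p\,\|(\Lambda-\mu I)^{-1}\|_p\,\|V^{-1}\|_p\,\|B\|_p \;=\; \kappa_p(V)\,\|(\Lambda-\mu I)^{-1}\|_p\,\|B\|_p.
\end{equation*}
Since $(\Lambda-\mu I)^{-1}$ is diagonal with entries $(\lambda_i-\mu)^{-1}$, its induced $p$-norm equals $\max_i|\lambda_i-\mu|^{-1} = 1/\min_i|\lambda_i-\mu|$ for every $p\in[1,\infty]$. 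Rearranging yields $\min_i|\lambda_i - \mu| \leq \kappa_p(V)\,\|B\|_p$, and choosing $i$ to attain the minimum finishes the argument.

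The delicate step is the Neumann-series implication, which is what converts the purely algebraic fact of singularity into an analytic norm bound; everything else is bookkeeping with induced $p$-norms and the observation that the induced $p$-norm of a diagonal matrix collapses to the entrywise maximum modulus. A minor subtlety worth double-checking is this last claim about diagonal matrices, but it follows directly from $\|Dx\|_p\leq(\max_i|d_{ii}|)\|x\|_p$ with equality on a standard basis vector.
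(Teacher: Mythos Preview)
Your proof is correct and is in fact the standard argument for the Bauer--Fike theorem. Note, however, that the paper does not supply its own proof of this result: it merely quotes the theorem with attribution to the textbooks of Golub--Van Loan and Horn--Johnson and then applies it. So there is nothing to compare against; your write-up is precisely the classical proof found in those references (factor out $V(\Lambda-\mu I)V^{-1}$, force $\|M\|_p\geq 1$ via the Neumann-series criterion, and read off the bound from the diagonal structure of $(\Lambda-\mu I)^{-1}$).
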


We adapt this result for an interval matrix $\imace{A}$ following the approach used for a different purpose, see Hlad\'{\i}k et al.~\cite{HlaDan2010}. We choose $p=2$. Any $A\in\imace{A}$ can be written as $A=\Mid{A}+A'$, where $|A'|\leq\Rad{A}$. Since $\|A'\|_2\leq\|\Rad{A}\|_2$, we have, by the Bauer--Fike theorem, that for each complex eigenvalue $\lambda(A)$ there is some complex eigenvalue $\lambda_i(\Mid{A})$  such that
\begin{align*}
|\lambda(A)-\lambda_i(\Mid{A})|
\leq \kappa_2(V)\cdot\|\Rad{A}\|_2.
\end{align*}

Recall that by $\langle c,r\rangle=\{z\in\C\mmid |z-c|\leq r\}$ we denote the ball (disc) in the complex plane with center $c$ and radius~$r$. In particular, put 
$c_i:=\lambda_i(\Mid{A})$ and $r:=\kappa_2(V)\cdot\|\Rad{A}\|_2$.
The above reasoning says that for each $A\in\imace{A}$, every eigenvalue $\lambda(A)$  lies within one of the discs $\langle c_i,r\rangle$ for some $i\in\seznam{n}$.

For the purpose of construction of a spectral decomposition of $\imace{A}$ we need the following assumption:
\begin{quotation}\sl\noindent
\textbf{Assumption.}
The discs $\langle c_i,r\rangle$, $i\in\seznam{n}$, are disjoint.
\end{quotation}
Under this assumption, the matrix $\imace{\Lambda}$ can be simply constructed such that the discs $\langle c_i,r\rangle$, $i\in\seznam{n}$ are situated on the diagonal. When working with rectangular complex intervals, we just take $[\Re(c_i)-r,\Re(c_i)+r]+i[\Im(c_i)-r,\Im(c_i)+r]$ instead.

Notice that there are also other ways known to estimate eigenvalues of general interval matrices; for instance \cite{Kol2006,QiuMulFro2001,RohDei1992}.
In the symmetric case, there exist several methods for estimating eigenvalues; see e.g.~\cite{Her1992,HlaDan2011b,HlaDan2011c,LenHe2007}.
In our numerical experiments, we use a combination of the methods presented in~\cite{HlaDan2010}.

\subsection{Enclosing the eigenvectors}

Rohn \cite{Roh1993b} presented a simple (strongly polynomial) test for checking whether a real vector $x\in\R^n$ is an eigenvector of some $A\in\imace{A}$. This is, however, not suitable for computing an enclosure $\imace{V}$. Hence we propose a method based on a different idea. Alternative approaches were proposed, e.g., in \cite{Kol2006} for the general and in \cite{MiyOgi2010,Rum2010} for the symmetric case.

\paragraph{General case}
Let $A\in\imace{A}$ and $\lambda$ be one of its eigenvalues. From the Assumption we know that it is a simple eigenvalue. Thus the matrix $B:=A-\lambda I_n$ has rank $n-1$. 
Let $x$ be an eigenvector corresponding to $\lambda$, that is, $Bx=0$. Since $x\not=0$, there is $j\in\seznam{n}$ such that $x_j\not=0$, and without loss of generality we can assume that $x$ is normalized such that $x_j=1$. Thus we can rewrite equation $Bx=0$ into the form $\tilde{B}\tilde{x}=-B_{*j}$, where $B_{*j}$ denotes $j$-th column of matrix $B$, and $\tilde{B}$ denotes $B$ after removing the $j$-th column and similarly for~$\tilde{x}$. This system is solvable, but overdetermined, so we can omit one equation. Since $B$ has rank $n-1$, there is some equation $i$ that can be removed and the resulting system $B^{ij}\tilde{x}=-B^i_{*j}$ has nonsingular matrix~$B^{ij}$. 

This reasoning holds also the other way round. If $B^{ij}$ is nonsingular for some $i,j\in\seznam{n}$ and $\tilde{x}$ is the solution of $B^{ij}\tilde{x}=-B^i_{*j}$, then extending $\tilde{x}$ to the full length vector $x$ by inserting $1$ yields an eigenvector $x$ of~$A$.

The above discussion justifies the method for computing an enclosure of eigenvectors and consequently~$\imace{V}$. Algorithm~\ref{algEigvec} gives the pseudocode of this method. In step~\ref{stepAlgEigvecIls}, we need to compute an enclosure of the solution set of interval linear equations. In general, the entries are complex intervals. The solution set of complex interval systems has a complicated structure and is hard to characterize \cite{Hla2010f}. However, we can transform it to a real interval system of double size and apply standard methods. This transformation causes linear dependencies between the interval coefficients; we can relax these dependencies, which results in (hopefully slight) overestimation.
In our implementation, we call the \textsf{Intlab} function \texttt{verifylss}, ignoring the dependencies.

\begin{algorithm}[t]
\KwIn{Matrix $\imace{A}\in\IR^{n\times n}$ and enclosure of $k$-th eigenvalue $\inum{\lambda}\in\IR$}
\KwResult{Interval enclosure $\ivr{x}\in\IR^n$ of $k$-th eigenvectors of $\imace{A}$}
$\imace{B} \leftarrow \imace{A} - \inum{\lambda} I_{n}$;\\
\For{$i = 1, \dots, n$}{
\For{$j = 1, \dots, n$}{
compute an enclosure $\tilde{\ivr{x}}$ to the interval system $\imace{B}^{ij}\tilde{x} = -\imace{B}^{j}_{*j}$;\label{stepAlgEigvecIls}\\
\If {succeeded}{
extend $\tilde{\ivr{x}}$ to $\ivr{x}$ by inserting the component~1;\\
\KwRet{$\ivr{x}$}\\ 
}
}
}
\KwRet{\quo{algorithm failed to compute $\ivr{x}$}}\\ 
\caption{Interval enclosure of eigenvectors\label{algEigvec}}
\end{algorithm}

\paragraph{Symmetric case}

Due to our Assumption, the computed eigenvectors are always orthogonal. However, they need not be unit. Therefore, we normalize them even in the interval enclosure form. Let $\ivr{v}$ be $i$-th column of $\imace{Q}$. Thus, we divide this column by the interval $\|\ivr{v}\|_2=\{\|v\|\mmid v\in\ivr{v}\}$. This interval of norms of all vectors in $\ivr{v}$ can be easily and exactly computed up to numerical accuracy. We perform this procedure for each column of $\imace{Q}$ so that we can be sure that each set of orthonormal eigenvectors is included in~$\imace{Q}$. As a consequence, the spectral decomposition of $\smace{A}$ has the desired form of $\smace{A}\subseteq \imace{Q}\imace{\Lambda}\imace{Q}^T$.

If we fail to compute the eigenvectors by the above procedure, we can set all entries of $\imace{Q}$ to be $[-1,1]$. Since the entries of orthogonal matrices lie in this interval, we have a guaranteed enclosure. Even though the enclosure may seem very rough, for wide input intervals it pays off. In addition, the Assumption is not needed.

In step~\ref{stepAlgEigvecIls}, we compute an enclosure of the solution set of interval linear equations. In this case the entries are real intervals, however, there are dependencies between the interval entries caused by symmetry of $\smace{A}$. In our implementation, we relax them, which results in mild overestimation of the eigenvectors. Notice that there are known methods that can take into account such dependencies to some extent; see \cite{Rum2010,Ska2018,Kol2004c,Pop2019au,SkaHla2017b} among others.

\begin{example}
Let $\smace{A}$ be a diagonal matrix with interval entries $\inum{d}_1,\dots,\inum{d}_n\in\IR$. The best spectral decomposition is obviously $\imace{Q}=I_n$, $\imace{\Lambda}=\smace{A}$. As long as we employ a sharp method for enclosing eigenvalues (which is the case of \cite{HlaDan2010}), then indeed we obtain $\imace{\Lambda}=\smace{A}$. Under the Assumption, we also compute tight envelopes of the eigenvectors. For the $i$-th interval eigenvalue $\inum{\lambda}$, the interval matrix $\smace{A}-\inum{\lambda}I_n$ is diagonal and only the $i$-th diagonal entry contains the zero. Therefore our procedure for computing eigenvectors succeeds and returns $\ivr{v}=e_i$, the $i$-th canonical unit vector. Thus, rather surprisingly, the method is able to compute tight enclosure $\imace{Q}=I_n$ for eigenvectors.
\end{example}

\section{Numerical study: Computing powers of interval matrices}



Eigenvalue decomposition is a very important technique in numerical algorithms. 
Likewise eigenvalue decomposition of an interval matrix can help to solve problems in the discipline of interval computation. This section is devoted to analysis of applying spectral decomposition of a matrix $\imace{A}$ to computing the powers of $\imace{A}$. Computing powers is important not only in the real case \cite[chap.~17]{Hig1996}, but also in the interval case, e.g., for calculation of the exponential of interval matrices \cite{GolNeu2014,OppMich1988}.

The $k$-th power of an interval matrix $\imace{A}\in\IR^{n\times n}$ is defined as a set of all $k$-th powers of all realizations, that is,
\begin{align*}
\imace{A}^k:=\{A^k\mmid A\in\imace{A}\}.
\end{align*}
Since this is not necessarily an interval matrix, we will be content with its interval hull
\begin{align*}
\hulli{\imace{A}^k}:=\ihull\{A^k\mmid A\in\imace{A}\}
\end{align*}
instead. 
Whereas computing $[\imace{A}^2]$ is easy by interval arithmetic, computing the cube $[\imace{A}^3]$ is an NP-hard problem~\cite{KoshKre2005}. We will show that this is the case even for symmetric interval matrices.

\begin{proposition}
Computing $[(\smace{A})^3]$  is NP-hard.
\end{proposition}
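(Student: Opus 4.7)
The plan is a polynomial-time reduction from a standard NP-hard problem: given a symmetric positive semidefinite matrix $B\in\R^{m\times m}$ with integer entries, compute $\max_{u\in[-1,1]^m}u^T B u$. Its NP-hardness follows from \textsc{MaxCut}: when $B$ is a graph Laplacian, the map $u\mapsto u^T B u$ is convex, so the maximum over the box is attained at a vertex of $\{-1,1\}^m$ and equals $4\cdot\mathrm{MAXCUT}$.

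Given such a $B$, I would construct the $n\times n$ symmetric interval matrix (with $n=m+1$)
$$
\smace{A}=\begin{pmatrix}0 & \imace{u}^T\\ \imace{u} & B\end{pmatrix},\qquad \imace{u}=[-1,1]^m.
$$
Both $\Mid{A}$ and $\Rad{A}$ are symmetric, so $\smace{A}$ is a legitimate symmetric interval matrix, and each realization has the form $A=\begin{pmatrix}0 & u^T\\ u & B\end{pmatrix}$ for some $u\in[-1,1]^m$. A short block computation gives $(A^2)_{11}=\|u\|_2^2$ and $(A^2)_{1j}=(u^T B)_{j-1}$ for $j\geq 2$, whence
$$
(A^3)_{11}=(A^2)_{11}A_{11}+\sum_{k=2}^{n}(A^2)_{1k}A_{k1}=0+u^T Bu=u^T B u.
$$

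Since $u\mapsto u^T B u$ is continuous on the connected box $[-1,1]^m$, it attains every value between its extrema, so
$$
\hulli{(\smace{A})^3}_{11}=\bigl[\,\min_{u\in[-1,1]^m}u^T B u,\ \max_{u\in[-1,1]^m}u^T B u\,\bigr].
$$
Reading the right endpoint from the computed hull then solves the NP-hard maximization, completing the reduction. The only delicate point is the choice of border: a nonzero $(1,1)$ entry $\alpha$ would add $\alpha^3+2\alpha\|u\|_2^2$ to $(A^3)_{11}$ and obscure the identification with $u^T B u$, so fixing it to zero — and exploiting the symmetry constraint of $\smace{A}$ to tie the first row and first column to a single interval vector $\imace{u}$ — is what makes the reduction go through.
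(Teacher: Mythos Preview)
Your proof is correct and follows essentially the same approach as the paper: both border a fixed symmetric matrix by an interval vector (you place the scalar zero at position $(1,1)$, the paper at $(n{+}1,n{+}1)$), then read off $u^TBu$ (resp.\ $x^TCx$) from the corresponding corner entry of the cube. The only cosmetic difference is that the paper cites the NP-hardness of $\max_{x\in\ivr{x}} x^TCx$ for general symmetric $C$ and box $\ivr{x}$ directly, whereas you derive it from \textsc{MaxCut} via Laplacians over $[-1,1]^m$; both routes yield the same reduction.
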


\begin{proof}
Let $C\in\R^{n\times n}$ be symmetric and $\ivr{x}\in\IR^n$. It is known \cite{KreLak1998,Vav1991} that computing $\max_{x\in\ivr{x}} x^TCx$ is NP-hard. Consider the symmetric interval matrix
$$
\smace{A}:=\begin{pmatrix}C&\ivr{x}\\\ivr{x}^T&0\end{pmatrix}^S.
$$
For each $A\in\smace{A}$ we have 
$$
A^2=\begin{pmatrix}C^2+xx^T&Cx\\x^TC&x^Tx\end{pmatrix},\quad
A^3=\begin{pmatrix}C^3+Cxx^T+xx^TC&C^2x+xx^Tx\\x^TC^2+x^Txx^T&x^TCx\end{pmatrix}.
$$
Therefore computing the right end-point of $[(\smace{A})^3]_{n+1,n+1}$ is NP-hard.
\end{proof}

This property indicates that computation of the higher powers is even a more complicated problem. The core problem is in the number of interval arithmetic operations with multiple occurrences of variables. A natural way to reduce a possible overestimation is a reduction of computations with this property.

There are two basic approaches in hand for computing an enclosure of $\hulli{\imace{A}^k}$:
\begin{enumerate}
\item
\emph{Binary exponentiation.}
This method, classically applied for powers of real matrices, computes the $k$-th power similarly as for the real matrix via approximately $\log(k)$ consecutive squaring of matrices according to the binary representation of the exponent $k$. We just replace floating point arithmetic by interval arithmetic. 
\item
\emph{Spectral decomposition.}
The second method is based on the above discussed spectral decomposition. Let $\imace{V}$ and $\imace{\Lambda}$ be the interval matrices computed from a spectral decomposition of $\imace{A}$. Then $\hulli{\imace{A}^k}\subseteq \imace{V}\imace{\Lambda}^k\imace{V}^{-1}$, where $\imace{\Lambda}^k$ is simply the matrix $\imace{\Lambda}$, in which we take the $k$-th power of the diagonal entries calculated by interval arithmetic.
\end{enumerate}

In the following we compare numerically these two approaches. The calculations were done in \textsf{Matlab} 9.3.0 (R2017b). For interval arithmetic and other interval functions we employ the interval package \textsf{Intlab} 11\cite{Rum1999} and the interval library Lime~\cite{Lime} (e.g., for computation of eigenvalues for symmetric matrices).
Notice that all calculations were done in a verified way \cite{Rum2010}, so that the resulting enclosures are numerically guaranteed; to this end, we utilized the verification software from \textsf{Intlab}.
All computations were done on a server with AMD EPYC 7301 16-Core Processor and 128 GB RAM although each computation was limited to single CPU only. The server was operated with Debian GNU/Linux 9.8 (stretch).

As we mentioned earlier, for a symmetric interval matrix $\smace{A}$ we can simply enclose the entries of $\imace{Q}$ by the interval $[-1,1]$. In this way, the enclosure $\hulli{(\smace{A})^k}\subseteq \imace{Q}\imace{\Lambda}^k\imace{Q}^T$ is the interval matrix with identical entries, yielding:

\begin{observation}\label{obser:powersym}
We have 
$\hulli{(\smace{A})^k}\subseteq \imace{H}$, where $\imace{H}_{ij}=[-h,h]$ and $h=\sum_{i=1}^n\Mag(\inum{\Lambda}_{ii})^k$.
\end{observation}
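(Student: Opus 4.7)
The plan is to combine two ingredients already established in the paper: the spectral-decomposition enclosure $\hulli{(\smace{A})^k}\subseteq \imace{Q}\imace{\Lambda}^k\imace{Q}^T$, and the (coarse but safe) fallback enclosure $\imace{Q}_{ij}\subseteq[-1,1]$ which holds for every orthogonal matrix. Once these are plugged into the matrix product, the entrywise bound should fall out from the magnitude inequality for interval arithmetic.

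First I would fix indices $i,j\in\seznam{n}$ and write out the $(i,j)$-th entry of the product as the interval sum
\begin{align*}
\bigl(\imace{Q}\imace{\Lambda}^k\imace{Q}^T\bigr)_{ij}
= \sum_{\ell=1}^n \imace{Q}_{i\ell}\,\bigl(\inum{\Lambda}_{\ell\ell}\bigr)^k\,\imace{Q}_{j\ell}.
\end{align*}
Using $\imace{Q}_{i\ell},\imace{Q}_{j\ell}\subseteq[-1,1]$, each factor in the $\ell$-th term has magnitude at most $1$, and by submultiplicativity of $\Mag$ under interval multiplication together with the standard bound $\Mag(\inum{a}^k)\leq \Mag(\inum{a})^k$, the $\ell$-th term is contained in the symmetric interval $[-\Mag(\inum{\Lambda}_{\ell\ell})^k,\,\Mag(\inum{\Lambda}_{\ell\ell})^k]$.

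Summing over $\ell$ (and using that a sum of symmetric intervals $[-a_\ell,a_\ell]$ equals $[-\sum a_\ell,\sum a_\ell]$) produces the symmetric interval $[-h,h]$ with $h=\sum_{\ell=1}^n\Mag(\inum{\Lambda}_{\ell\ell})^k$, independent of $i$ and $j$. Hence every entry of $\imace{Q}\imace{\Lambda}^k\imace{Q}^T$ is contained in $[-h,h]$, giving $\hulli{(\smace{A})^k}\subseteq \imace{H}$ as claimed.

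I do not anticipate a real obstacle here; the only point requiring care is the magnitude estimate $\Mag(\inum{\Lambda}_{\ell\ell}^k)\leq \Mag(\inum{\Lambda}_{\ell\ell})^k$, which follows because $|z_1\cdots z_k|\leq |z_1|\cdots |z_k|$ for any realization and the maximum on the left is attained by some such product. Everything else is a routine application of interval arithmetic to the already-given spectral enclosure.
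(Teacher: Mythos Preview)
Your argument is correct and is exactly the approach the paper takes: it states the observation immediately after noting that with the fallback enclosure $\imace{Q}_{ij}=[-1,1]$ the product $\imace{Q}\imace{\Lambda}^k\imace{Q}^T$ becomes an interval matrix with identical entries. Your write-up simply spells out that one-line computation in detail.
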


\subsection{General testing pipeline}

Our approach starts with generating a set of random matrices of a predefined type for consecutive testing. Let $n$ be a fixed size of tested matrices. Let $N_t$ denote the number of runs, which in our case is set-up at the value $1000$. Random generators expect three parameters: the dimension of the generated matrices ($n$), the real center value ($c$) and the radius~($r$). Depending on the matrix class, appropriate random matrices are generated; see section~\ref{ss:test:genint} for general interval matrices, section~\ref{ss:test:sym} for symmetric interval matrices and section~\ref{ss:test:circ} for circulant interval matrices.

Let $k\in \{k_1, k_2, \ldots, k_M\}$ be the exponent of the test set and $t \in \{1, 2, \ldots, N_t\}$ the index of a current run. For the generated interval matrix $\imace{A}_t$ we apply the the following steps:
\begin{enumerate}
\item Compute $k$-th power $\imace{B}_t$ of the matrix $\imace{A}_t$ using the \emph{binary exponentiation}.
\item Compute $k$-th power $\imace{C}_t$ of the matrix $\imace{A}_t$ using the \emph{spectral decomposition}.
\item Compare interval enclosures $\imace{B}_t$ and $\imace{C}_t$.
\end{enumerate}
For comparison of interval enclosures we utilize the sum of all radii, i.e., the quantity
\begin{equation*}
R(\imace{B}_t) = \sum_{i,j} (\imace{B}_t^{\Delta})_{i,j}.
\end{equation*}
While we assume that binary exponentiation represents a natural way to compute powers of interval matrices, we define the following ratio to compare this method to the spectral counterpart,
\begin{equation}\label{eq:rate-sum-radii}
\rho_t = \frac{R(\imace{C}_t)}{R(\imace{B}_t)}.
\end{equation}

The final step of the procedure is to produce an average of this rate for all tested matrices. This is not always possible for the whole tested set since the computation of a spectral decomposition may fail for a few of reasons, e.g. the most sensitive part of exponentiation is computation of eigenvectors. If computation of $\imace{C}_t$ fails, we throw away the whole pair $\imace{B}_t$, $\imace{C}_t$. For this reason we define $T_r = \{t_1, t_2, \ldots, t_{N_t'}\}$ to be the set of indices of successful computations where $N_t' \leq N_t$. For such a reduced set of instances we compute the average value $\hat{\rho}$ and the median $\tilde{\rho}$ of characteristic~\eqref{eq:rate-sum-radii}.


\subsection{Powers of general interval matrices}\label{ss:test:genint}

The first type of tested matrices is represented by general interval matrices. For a fixed $t \in \{1,2,\ldots, N_t\}$ let $G_t$ and $G_t'$ denote $n \times n$ matrices with elements chosen randomly uniformly from intervals $(-1,1)$ and $(0,1)$, respectively. For a given center value $c$ and a chosen radius $r$ we define a random interval matrix as $\imace{A}^R_t = [A_{c,t} - A^{\Delta}_t,A_{c,t} + A^{\Delta}_t]$, where $A_{c,t} = c\cdot G_t$ and $A_t^{\Delta} = r \cdot G_t'$. Each such matrix is further normalized using the magnitude of its interval norm, i.e., 
\begin{equation*}
	\imace{A}_t = \imace{A}^R_t / \|\imace{A}^R_t\|_M,
\end{equation*}
where $\|\cdot\|_M$ represents the right end-point of the interval enclosure of the spectral norm values computed by \textsf{Intlab}. The results of our testing can be explored in Figure~\ref{fig:rnd_power_accur}.

\begin{figure}[t]
     \subfloat[$n = 5$, $r=0.001$, starting at $k=15$.]{%
       \includegraphics[width=5.5cm]{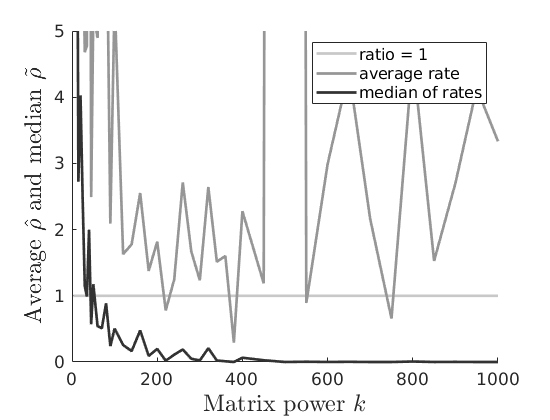}
       \label{fig:rnd_power_accur:first}
     }
     \subfloat[$n = 5$ and $r=0.01$]{%
       \includegraphics[width=5.5cm]{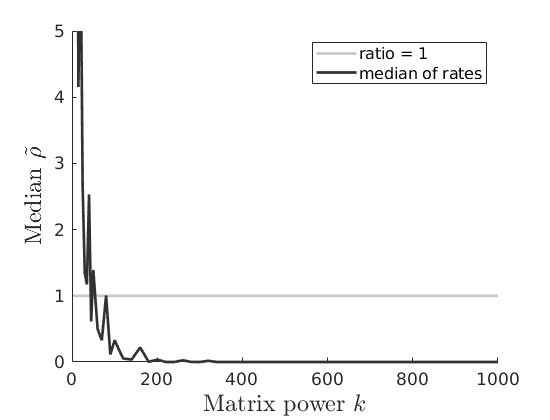}
     }
     \subfloat[$n = 10$ and $r=0.001$]{%
       \includegraphics[width=5.5cm]{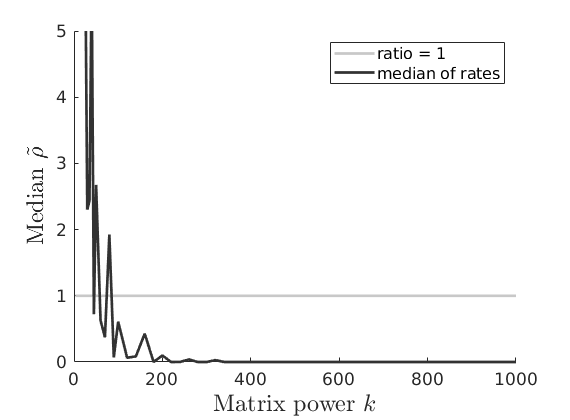}
     }
  \caption{(General case) Comparison of the spectral and the binary method for computing interval matrix powers using the ratio~\eqref{eq:rate-sum-radii}. The results were obtained for a testing set of $N_t = 1000$ square matrices of sizes $n \in \{5, 10\}$ with the center $c = 10$ and the radius $r \in \{0.01, 0.001\}$. The light gray line shows the value $1$, representing similar accuracy of the methods.}
  \label{fig:rnd_power_accur}
\end{figure}

For general random matrices with the radius $r=0.001$ and the dimension $n=5$, we successfully compute almost $90\%$ of all powers, where the majority of remaining cases were due to problems with inversion of the eigenvector matrices (caused by singularity of $\imace{V}$). The number of cases where the discs enclosing eigenvalues are intersecting, violating thus the Assumption, remains below $0.5\%$. Moreover, we can see in Figure~\ref{fig:rnd_power_accur:first} that the average values are significantly influenced by numerical extremes. For this reason we have chosen median as a well suited measure to compare these methods. For two other cases presented on Figure~\ref{fig:rnd_power_accur} we enlarge either the radius to $r=0.01$ or the dimension to $n=10$. For both extensions the median values behave roughly the same, as shown in Figure~\ref{fig:rnd_power_accur}. The main difference is in the fraction of failures for these cases -- generally a problem with inversion of $\imace{V}$ remains major with a slight grow of the effect of intersecting discs. When enlarging the radius, the number of problems with inversion of $\imace{V}$ achieves almost $60\%$, and the number of cases where the discs were intersecting reached $4\%$. 

\subsection{Powers of symmetric interval matrices}\label{ss:test:sym}

Symmetric interval matrices we generated as follows. For a fixed $t \in \{1,2,\ldots, N_t\}$ let $G^S_t$ and $G'^S_t$ denote $n \times n$ symmetric matrices with elements chosen randomly uniformly from $(-1,1)$ and $(0,1)$, respectively. The symmetry is simply achieved by randomly generating the diagonal and the above diagonal entries only and reusing them for the remaining ones. For a given midpoint value $c$ and a chosen radius $r$ we define a random interval matrix as $\imace{A}^{SR}_t = [A^S_{c,t} - A^{S,\Delta}_t, A^S_{c,t} + A^{S,\Delta}_t]$, where $A^S_{c,t} = c \cdot G^S_t$ and $A_t^{S,\Delta} = r \cdot G'^S_t$.  To reduce possibility of exponential grow of matrix power values, we again normalize each matrix as follows $\imace{A}^{S}_t = \imace{A}^{SR}_t / \|\imace{A}^{SR}_t\|_M$.

\begin{figure}[ht]
     \subfloat[$n = 5$ and $r=0.001$]{%
       \includegraphics[width=5.5cm]{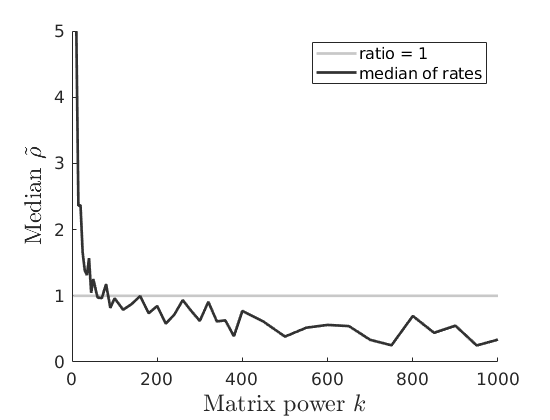}
     }
     \subfloat[$n = 5$ and $r=0.1$]{%
       \includegraphics[width=5.5cm]{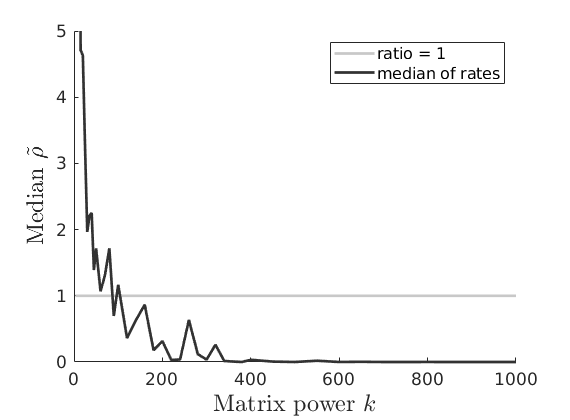}
     }
     \subfloat[$n = 20$ and $r=0.001$]{%
       \includegraphics[width=5.5cm]{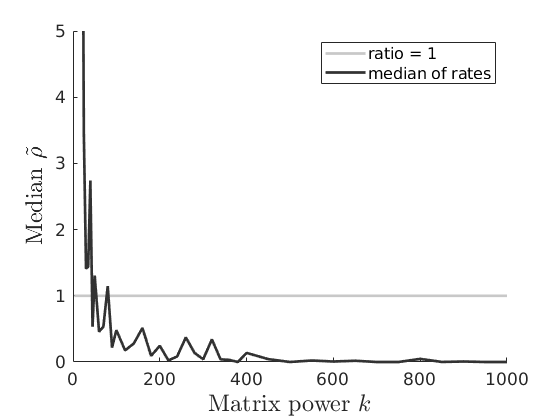}
     }
  \caption{(Symmetric matrices) Comparison of the spectral and the binary method for computing interval matrix powers using the ratio~\eqref{eq:rate-sum-radii}. The results were obtained for a testing set of $N_t = 1000$ square matrices of sizes $n \in \{5, 20\}$ with the center $c = 10$ and the radius $r \in \{0.01, 0.001\}$. The light gray line shows the value $1$, representing similar accuracy of the methods.}
  \label{fig:sym_power_accur}
\end{figure}
For these randomly generated symmetric matrices, the results are shown in Figure~\ref{fig:sym_power_accur}. In this case, regardless of size and radius, the resulting median behaves the same. In contrast to computations on general matrices there were no failures and thus all powers were successful in computation. Of course, when powers of symmetric matrices generated in this section are computed by the general method, the results are significantly worse. But due to the space limitation we do not include the output of the corresponding computations here.

\subsection{Powers of special interval matrices}\label{ss:test:circ}

In this section we describe powers for special types of matrices making use of their specific properties. 

\paragraph{Circulant matrices}
The first class of special matrices are circulant matrices. Given a vector $c = \{c_0, c_1, \ldots, c_{n-1}\}$, the corresponding circulant matrix reads as
\begin{equation}\label{eq:circulant}
C = 
\left(\begin{matrix}
    c_0 		& c_1	 		& c_2		& \hdots		& c_{n-2}		& c_{n-1}	\\
    c_{n-1}		& c_0	 		& c_1	 	& c_2			& \hdots		& c_{n-2}	\\
    \vdots		& \ddots		& \ddots	& \ddots 		& \ddots 		& \vdots	\\
    c_2			& c_3			& \ddots	& c_{n-1} 		& c_0 			& c_1		\\
    c_1	 		& c_2			& \hdots	& c_{n-2}		& c_{n-1}		& c_0		\\
\end{matrix}\right).
\end{equation}

It is well known that normalized eigenvectors of a circulant matrix are defined for any $j = 0, 1, \ldots, n -1$ as
\begin{equation}
v_j = \frac{1}{\sqrt{n}}(1,\omega_j, \omega_j^2, \ldots, \omega_j^{n-1}),
\end{equation}
where $\omega_j = \exp\left(i\frac{2\pi j}{n}\right)$ are $n$ different roots of unity. For any $j = 0, 1, \ldots, n -1$, the corresponding eigenvalue is then as follows
\begin{equation}
\lambda_j = c_1\omega_j^{n-1} + c_2\omega_j^{n-2} + \ldots + c_{n-2}\omega_j^2 + c_{n-1}\omega_j + c_0.
\end{equation}

To compute the power of an interval circulant matrix, where real elements are exchanged by intervals, we can compute the powers using the spectral method via utilization of the above mentioned exact form of eigenvectors and eigenvalues using interval arithmetic. Having both eigenvectors and eigenvalues computed we can define the spectral decomposition and thus compute the power. To do so, we need to generate a set of random circulant matrices.

For a fixed $t \in \{1,2,\ldots, N_t\}$ let $g_t$ and $g'_t$ denote $n$-dimensional vectors with elements chosen randomly uniformly from $(-1,1)$ and $(0,1)$, respectively. For a given center value $c$ and a chosen radius $r$ we determine two transformed vectors $a_{c,t} = c \cdot g_t$ and $a_t^{\Delta} = r \cdot g_t'$. To define a random interval circulant matrix we simply distribute elements $a_{c,t}$ and $a_t^{\Delta}$ into circulant position obtaining thus circulant matrices $A^C_{c,t}$ and $A_t^{C,\Delta}$. 
A random interval circulant matrix is then given by $\imace{A}^{C}_t = [A^C_{c,t} - A^{C,\Delta}_t,A^C_{c,t} + A^{C,\Delta}_t]$ and similarly to previous cases we introduce normalization $\imace{A}^{C}_t = \imace{A}^{C}_t / \|\imace{A}^C_t\|_M$. We can see representative results of testing circulant matrices in Figure~\ref{fig:circ_power_accur}.

\begin{figure}[ht]
\centering{
     \subfloat[Circulant matrix with $d = 5$ and $r=0.001$]{%
       \includegraphics[width=5.5cm]{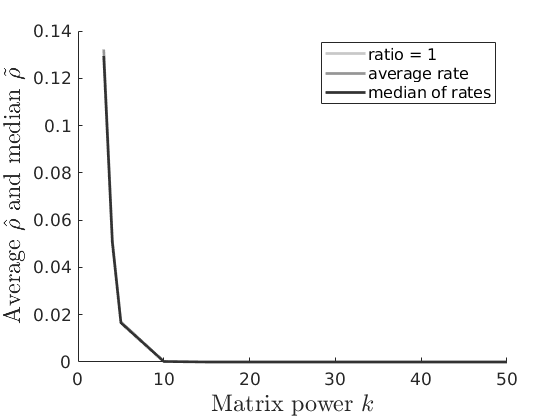}
       \label{fig:circ_power_accur}
     }
     \hspace{0.5cm}
     \subfloat[Symmetric matrix with $d = 5$ and $r=1.0$]{%
       \includegraphics[width=5.5cm]{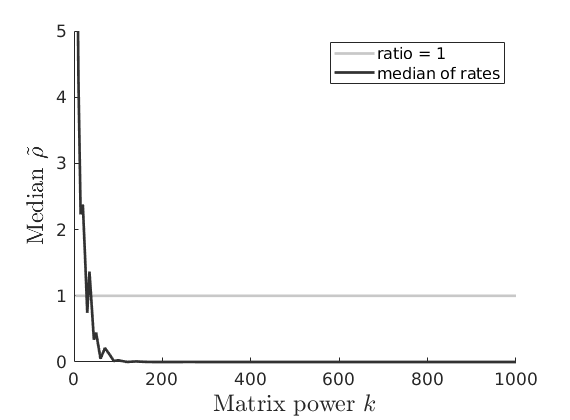}
       \label{fig:fat_sym_power}
     }
     }
  \caption{(Special matrices) Comparison of the spectral and the binary method when computing interval matrix powers using the ratio~\eqref{eq:rate-sum-radii}. (Left) The results shown were obtained for a testing set of $N_t = 1000$ random square circulant matrices of sizes $n = 5$ with the center $c = 10$ and the radius $r = 0.001$. (Right) The results shown were obtained for a testing set of $N_t = 1000$ random square symmetric matrices of sizes $n = 5$ with the center $c = 10$ and the radius $r = 1.0$. The light gray line shows the value $1$, representing similar accuracy of the methods.}
\end{figure}

We can see that in this ideal case we can compute the powers very effectively -- note that in this case we exceptionally present average values as well since these are equal to median values.

\paragraph{Symmetric interval matrices with large intervals}
For intervals with large radius the Assumption is often violated and so consequent computations of $\imace{V}$ fail. Therefore for wide intervals we use another approach based on Observation~\ref{obser:powersym}. This observation, translated to the spectral method, simply says that we can use a matrix of interval $[-1,1]$ as a matrix of eigenvectos for our previously computed symmetric eigenvalues. This approach enables us to avoid the application of Algorithm~\ref{algEigvec}.

The results of matrix power computations for random symmetric interval matrices with large radii are displayed in Figure~\ref{fig:fat_sym_power}. We can see that using this simple approximation of eigenvectors is relatively successful for large radii. This, however, holds only for a large radius. Reducing the radius to the original value of $r=0.001$ makes the spectral method quite ineffective.

%
%
%
%

\subsection{Discussion of the results}

The results have one (rather obvious) feature in common: the larger the exponent in the matrix power, the more efficient the spectral decomposition method is. For small powers, the binary exponentiation naturally must win, but for higher powers, the spectral decomposition approach comes into play. The change-point varies with methods as well as with the radius values and matrix sizes. For general random matrices it is roughly at $k=50$ and for symmetric around $k=80$. Notice that we call quite basic functions for estimating the eigenvalues and for solving interval linear systems. An application of more advanced methods and utilizing the structure of the problem may further decrease the change-point and improve other properties of the methods.

The results of computations for general random interval matrices seem to be slightly better than for symmetric matrices. The disadvantage for general matrices is, however, the inability of the method to compute the results for all random inputs. The ratio of successful computations decreases with the increase of the radius and dimension of generated matrices. This can end in the inability to compute the power for most of the inputs. Computing powers of random symmetric interval matrices almost does not suffer from this problem; in our experiments all instances were successfully computed. In this case, there is a problem with extreme values of powers possibly caused by a combination of numerical error and some problems in eigenvalue and eigenvector estimation. It is worth mentioning that we normalized the interval matrices in order to avoid divergence of computing the power. For general interval matrices their powers might quickly grow to infinity.

We presented also some results for special classes of matrices. First, for interval circulant matrices, the spectral decomposition method is always better or the same as the binary exponentiation. This is a motivation to improve the spectral method even for exponentiation -- improvements of eigenvalue and eigenvector estimations can be beneficial for computing the powers. Another example of a specific structure is a symmetric interval matrix with a large radius. Such a matrix can be problematic for computing its power due to a potentially long computation time and additional problems with numerical stability. We showed, however, how to handle such a specific matrices using a simple observation about its spectral decomposition. Both these examples underline the well known fact that utilizing the specific properties of special matrices has a high potential of obtaining better results in the orders of magnitude.

Time complexity of matrix power computation using the spectral decomposition is practically independent of exponent values. In contrast, time complexity of the binary method grows with the exponent. Generally, time complexity of the spectral method is higher than time complexity of the binary method. Thanks to the growth of time complexity of the binary method, the average ratio of the spectral method and the binary method computation time is decreasing, and thus the spectral method is becoming less inefficient. Roughly speaking, for general random interval matrices this ratio is about $10$ and for symmetric matrices is has value around $100$. For symmetric matrices this higher value is due to more complicated (but more accurate) computation of eigenvalues.


\section{Conclusions}

We presented an algorithm for computing an enclosure of the eigenvalue decomposition of an interval matrix. Both the enclosures of eigenvalues and eigenvectors are important for many applications, but the whole spectral decomposition enables further operations with interval matrices. As a basic example, we considered the problem of computing powers of interval matrices. It turned out that for a general or symmetric interval matrix and small exponents, the computation of matrix powers using the spectral decomposition is less effective than the binary exponentiation. Nevertheless, as the exponent increases, the spectral decomposition approach 
is more efficient.
Moreover, as long as we have some additional information (e.g., for structured matrices), effectiveness of the approach might substantially rise.
The results are particularly convincing for circulant interval matrices, having a specific form of eigenvalues and eigenvectors. The conclusion from this computation is twofold. First, it encourages further investigation of special matrices as a promising way of research. Second, it suggests that potential improvement of eigenvalues and eigenvectors estimations might help the spectral method to become broader applicable for computing matrix powers or other problems.

There is some space for further improvements; we mention two promising directions. First, in the recent years, there was a development of the so called parameterized solution \cite{Kol2014b,Ska2018,SkaHla2017b}. It is an enclosure in a generalized form and provides tighter enclosures than interval boxes, in particular when the enclosure is subject to subsequent calculations. Thus a parameterized form of eigenvector estimation could provide tighter results. Second, the so called affine arithmetic \cite{Ska2018,SkaHla2017a,StoFig1997} is a generalized interval arithmetic that is able to keep certain correlations between interval quantities and produces tighter enclosures in general. Incorporating affine arithmetic thus may lead to further tightening of spectral decomposition.

\subsubsection*{Acknowledgments.} 
The authors were supported by the Czech Science Foundation Grant P403-18-04735S.

    \section*{References}
    \bibliography{lit-si}

    \end{document}